\theoremstyle{plain}
\newtheorem{thm}{\protect\theoremname}
\DeclareMathAlphabet{\mathcal}{OMS}{cmsy}{m}{n}
\providecommand{\theoremname}{Theorem}
\providecommand{\theoremname}{Theorem}
\begin{document}
\global\long\def\Sgm{\boldsymbol{\Sigma}}%
\global\long\def\W{\boldsymbol{W}}%
\global\long\def\H{\boldsymbol{H}}%
\global\long\def\P{\mathbb{P}}%
\global\long\def\Q{\mathbb{Q}}%
\global\long\def\O{\bm{\omega}}%
\global\long\def\uu{\bm{u}}%
\global\long\def\A{\bm{A}}%
\global\long\def\S{\bm{S}}%

\title{The vortex dynamics in incompressible viscous turbulent flows }
\author{Jiawei Li\thanks{Department of Mathematical Sciences, Carnegie Mellon University, Pittsburgh,
15213. Email: jiaweil4@andrew.cmu.edu}\ \ and Zhongmin Qian\thanks{Mathematical Institute, University of Oxford, Oxford OX2 6GG. Email:
zhongmin.qian@maths.ox.ac.uk}}
\maketitle
\begin{abstract}
In this paper, we consider turbulence from a geometric perspective
based on the vorticity equations for incompressible viscous fluid
flows. We derive several quantitative statements about the statistics
of turbulent flows. In particular we establish an entropy decay inequality
which involves the turbulent kinetic energy and $L^{1}$-enstrophy,
and we identify the small time scale of the vorticity broken down
and the vorticity creation under the universality assumption of small
time scales of turbulence flows.

\selectlanguage{english}%
\medskip{}

\selectlanguage{british}%
\emph{key words}: Navier-Stokes equation, rate-of-strain tensor, Reynolds
number, turbulent flows, vorticity

\medskip{}

\emph{MSC classifications}: 76F05, 76F20 
\end{abstract}

\section{Introduction }

The study of turbulence in fluids has been a prevalent topic as an
unfulfilled scientific discipline, although, unlike other theoretical
physics problems, the equations of motion even for turbulent flows
have been known for over a century. The study of turbulence flows
has been and will remain to be concentrated on methods of extracting
both qualitative and quantitative results about turbulent flows from
the fluid dynamic equations. The difficulty for attacking the so called
turbulence problem lies in the non-linear and non-local characteristic
of the motion equations. It has been known for quite long time that
solutions of Navier-Stokes equations are unstable under even tine
perturbations of initial data, and turbulent flows must develop into
chaotic movements if the Reynolds number becomes large. Statistical
mechanics approaches were thus developed several
decades ago, in which fluid dynamical variables were
treated as random fields, see \citep{Frisch 2004,Hinze book1975,Monin and Yaglom 1965}
for a comprehensive survey. In recent years, other ideas from field
theories and statistical mechanics such as renormalization have been
introduced to address on the other hand possible coherent structures
in chaotic turbulent motions, while only limited quantitative results
are achieved, see however \citep{McComb 1990,McComb 2014} and the
original literature therein for further information. In the last five
decades, the study of vorticity in fluids and vortex dynamics has
become an active research area in fluid mechanics and established
as an important branch of fluid dynamics. The vortex method has been
applied to the study of turbulence in particular through numerical
simulations \citep{Moffatt 1981,Moffatt and Tsinober 1992,Lesieur 1997}.
In this paper we aim to deduce a few quantitative statements on the
statistics of turbulent flows by applying the study of turbulent vortex
dynamics. We will consider incompressible viscous fluid flows in terms
of the vorticity equations, which are however equivalent to the Navier-Stokes
equations, but have an appealing feature that the vorticity actually
dominates the turbulent motion of fluids, see \citep{Foias 2001}
for a comprehensive description on the relation between Navier-Stokes
equations, vorticity and turbulence. The vorticity and the rate-of-strain
tensor have been emphasized in turbulence, and the production of the
vorticity through the rate-of-strain field has been considered as
the cause of the energy cascade, see Davidson \citep{Davidson book2015},
Hinez \citep{Hinze book1975} and Monin and Yaglom \citep{Monin and Yaglom 1965}
for its precise description.

The main results of this paper are presented in part 3 and 4, where
we consider the isotropic and non-isotropic flows respectively, and
in part 2, we will introduce several fluid dynamical variables and
derive a few equations with respect to these variables, which will
be useful in the computation later. Finally, the Einstein summation
convention is used throughout this paper.

\section{Fluid dynamical variables}

We begin with a review of several fluid dynamical variables from a
geometric perspective which may shed new light on turbulence in fluids.
The motion of a turbulent flow with viscosity $\nu>0$ may be described
by its velocity $\bm{u}=(u^{1},u^{2},u^{3})$, the fluid density $\rho$
and the pressure $p$, through the Navier-Stokes equation. There are
several fluid dynamical variables which play important roles in understanding
the underlying flow motion. Among them, the vorticity $\bm{\omega}=\nabla\times\uu$,
whose components $\omega^{i}=\varepsilon_{kji}\frac{\partial u^{j}}{\partial x^{k}}$,
where $\varepsilon$ is the Levi-Civita symbol. The total derivative
$\nabla\uu$ of $\uu$ is denoted by $\A$. Its components $A_{j}^{i}=\frac{\partial u^{i}}{\partial x^{j}}$,
are the most important fluid dynamical variables in our study below.
$\A$ is canonically decomposed into its symmetric part $\S=(S_{j}^{i})$
and its skew-symmetric part whose components are $\frac{1}{2}\varepsilon_{kji}\omega^{k}$.
In the turbulence literature, $|\bm{\omega}|^{2}$ is called the enstrophy,
$\S=(S_{j}^{i})$ is known as the strain tensor or rate-of-strain,
and $2\nu|\S|^{2}=2\nu S_{j}^{i}S_{i}^{j}$ is the energy dissipation.
These dynamical variables have clear geometric meanings. The vorticity
$\O$ is the exterior derivative of $\uu$, and the strain tensor
$\S$ is the Ricci curvature in the sense of Bakry-Emery \citep{Bakry-Emery}
associated with Taylor's diffusion of Brownian fluid particles. Therefore
the vorticity equation is exactly the Bochner identity in this context,
and the helicity $\uu\times\O$ is nothing but the Chern-Simon invariant
with respect to the material derivative $D/Dt$. Applications to isotropic
turbulence flows may be addressed therefore from this point of view.

Assume that the fluid is incompressible, hence $\nabla\cdot\uu=0$
which is equivalent to that $\textrm{tr}\A=0$. The following vector
identities 
\begin{equation}
\textrm{tr}\A^{2}=\nabla\cdot\left(\uu\cdot\nabla\uu\right)\label{tr2}
\end{equation}
and 
\begin{equation}
\textrm{tr}\A^{3}=\nabla\cdot\left[\uu\cdot\nabla\left(\uu\cdot\nabla\uu\right)-\frac{1}{2}\left(\nabla\cdot(\uu\cdot\nabla\uu)\right)\uu\right]\label{tr3}
\end{equation}
hold, which show that both $\textrm{tr}\A^{2}$ and $\textrm{tr}\A^{3}$
are exact, and $\textrm{tr}\A^{2}$, $\textrm{tr}\A^{3}$ are taken
as two parameters, named by some authors as $Q$ and $R$, in the
classification of turbulent flows, see \citep{Davidson book2015}
and \citep{Ooi Martin etc 1999} for example. Equation (\ref{tr2})
is well known and (\ref{tr3-sw}) was discovered firstly, to the best
knowledge of present authors, by Betchov \citep{Betchov 1956}. On
the other hand, a direct calculation shows that 
\begin{equation}
\textrm{tr}\A^{2}=\textrm{tr}\S^{2}-\frac{1}{2}|\O|^{2},\label{tr2-sw}
\end{equation}
\begin{equation}
\textrm{tr}\A^{3}=\textrm{tr}\S^{3}+\frac{3}{4}\O\cdot\S\O,\label{tr3-sw}
\end{equation}
and $\textrm{tr}\S^{2}$ is identified with its norm squared $|\S|^{2}$.
The following is a remarkable relation about the strain tensor $\S$
and its vorticity $\O$, which appears new to the present authors:
\begin{equation}
|\nabla\S|^{2}-\frac{1}{2}|\nabla\O|^{2}=\nabla\cdot\left[\textrm{tr}\left(\nabla\left((\uu\cdot\nabla)\nabla\uu\right)\right)-(\uu\cdot\nabla)\Delta\uu\right],\label{grad-tr sw1}
\end{equation}
where the trace on the right-hand side is taken over the two co-variant
derivatives, that is 
\[
\textrm{tr}\left(\nabla\left((\uu\cdot\nabla)\nabla\uu\right)\right)=\sum_{k}\frac{\partial}{\partial x^{k}}\left((\uu\cdot\nabla)\frac{\partial}{\partial x^{k}}\uu\right).
\]
Therefore not only $|\S|^{2}-\frac{1}{2}|\O|^{2}$ is exact, but also
is $|\nabla\S|^{2}-\frac{1}{2}|\nabla\O|^{2}$.

After having discussed a few basic vector identities about the strain
tensor and vorticity, we now want to provide a geometric interpretation
of the strain tensor. Recall that Taylor's diffusion for a viscous
fluid flow with velocity $\uu(\bm{x},t)$ is the process $X_{t}$
of Brownian fluid particles so that 
\begin{equation}
dX=\uu(X,t)dt+\sqrt{2\nu}dB,\label{eq:taylor1}
\end{equation}
where $B$ is a 3D Brownian motion. The precise meaning of stochastic
It\"{o}'s equation (\ref{eq:taylor1}) is not needed in the discussion
below, but see Ikeda-Watanabe \citep{Ikeda-Watanabe 1989} for the
theory of diffusion processes. What we need is the fact that the distribution
of the Taylor diffusion $X$ is completely determined by the backward
problem of the parabolic equation 
\[
\left(\frac{\partial}{\partial t}-(\nu\Delta+\uu\cdot\nabla)\right)f=0.
\]
In this sense one says that the elliptic operator of second order
$L=\nu\Delta+\uu\cdot\nabla$ is the infinitesimal generator of Taylor's
diffusion. It is important to notice that for incompressible fluid
the adjoint operator is given by $L^{\star}=\nu\Delta-\uu\cdot\nabla$
due to the divergence free condition $\nabla\cdot\uu=0$. According
to Bakry-Emery \citep{Bakry-Emery}, the Ricci curvature of $L$ can
be described by two step iteration: in the first iteration one recovers
the metric via the equation 
\[
\varGamma(f,g)=\frac{1}{2}\left(L(fg)-fLg-gLf\right),
\]
so that $\varGamma(f,g)=\nu\nabla f\cdot\nabla g$. The Ricci curvature
is obtained by iterating the previous process to define 
\[
\varGamma_{2}(f,g)=\frac{1}{2}\left(L\varGamma(f,g)-\varGamma(f,Lg)-\varGamma(g,Lf)\right),
\]
which yields that 
\begin{equation}
\varGamma_{2}(f,f)=\nu^{2}|\nabla^{2}f|^{2}-\nu S_{j}^{i}\frac{\partial f}{\partial x^{i}}\frac{\partial f}{\partial x^{j}}.\label{bochner1}
\end{equation}
According to the Bochner's equality \citep{Berard 1988}, one may
conclude that the Ricci curvature of $L$ is $-\S=(-S_{j}^{i})$,
and the Ricci curvature of $L^{\star}$ is the strain tensor $\S=(S_{j}^{i})$.

The Navier-Stokes equation may be written in terms of the adjoint
operator 
\begin{equation}
\left(\frac{\partial}{\partial t}-L^{\star}\right)\uu=-\nabla p,\quad\nabla\cdot\bm{u}=0,\label{eq:ns-f}
\end{equation}
and the vorticity equation is given by 
\begin{equation}
\left(\frac{\partial}{\partial t}-L^{\star}\right)\O=\S\O,\label{eq:ns-f-1}
\end{equation}
where $\S\bm{\omega}^{i}=S_{j}^{i}\omega^{j}$ for $i=1,2,3$,
which is the Weitzenb\"{o}ck formula applying to $\O$. The energy balance
equation is 
\begin{equation}
\left(\frac{\partial}{\partial t}-L^{\star}\right)|\uu|^{2}=-\nu|\O|^{2}+\nu\nabla\cdot\left(\uu\times\O\right)-2\nabla\cdot\left(p\uu\right).\label{eng-balance}
\end{equation}
In the description of turbulence flows, the enstrophy
$|\O|^{2}$ and the variation of the dissipation of energy $\textrm{tr}\S^{2}=S_{j}^{i}S_{i}^{j}$
are essential, and therefore one would like to understand their dynamics.
The evolution of the enstrophy is well known and
follows easily from the vorticity equation (\ref{eq:ns-f-1}): 
\begin{equation}
\left(\frac{\partial}{\partial t}-L^{\star}\right)\frac{|\O|^{2}}{2}=\O\cdot\S\O-\nu|\nabla\O|^{2},\label{enstropy 1}
\end{equation}
where the first term on the right-hand side is associated with the
vortex stretching. A detailed analysis of the dissipation energy $|\S|^{2}$
was initiated in the beautiful papers by Townsend \citep{Townsend 1951}
and Betchov \citep{Betchov 1956}, and there is an excellent account
in the book Davidson \citep{Davidson book2015} (pages 240 to 251).
Perhaps a better way to understand the dissipation of turbulence flows
is to start with the evolution equations for $\S=(S_{j}^{i})$ obtained
by differentiating the Navier-Stokes equation (\ref{eq:ns-f}), so
that we obtain that for all $i,j$, 
\begin{equation}
\left(\frac{\partial}{\partial t}-L^{\star}\right)S_{j}^{i}=-\sum_{k}S_{j}^{k}S_{k}^{i}+\frac{1}{4}\left(\delta_{ij}|\O|^{2}-\omega^{i}\omega^{j}\right)-\frac{\partial^{2}p}{\partial x^{i}\partial x^{j}},\label{ric-evo 1}
\end{equation}
and therefore 
\begin{equation}
\left(\frac{\partial}{\partial t}-L^{\star}\right)\textrm{tr}\S^{2}=-2\textrm{tr}\S^{3}-\frac{1}{2}\O\cdot\S\O-2\nu|\nabla\S|^{2}-2\sum_{i,j}S_{i}^{j}\frac{\partial^{2}p}{\partial x^{i}\partial x^{j}}.\label{evo-2}
\end{equation}
Now here is a non-trivial observation: the contraction between the
strain tensor $\bm{S}$ and the hessian of the pressure $p$
is in fact exact. More precisely, we have 
\[
\sum_{i,j}S_{i}^{j}\frac{\partial^{2}p}{\partial x^{i}\partial x^{j}}=\nabla\cdot\left(\uu\cdot\nabla(\nabla p)\right)-\nabla\cdot\left(\uu\Delta p\right),
\]
and thus, by combining (\ref{tr3}, \ref{tr3-sw}, \ref{grad-tr sw1})
together we obtain that 
\begin{align}
\left(\frac{\partial}{\partial t}-L^{\star}\right) & \textrm{tr}\S^{2}=\O\cdot\S\O-\nu|\nabla\O|^{2}-2\nabla\cdot\left[\uu\cdot\nabla(\nabla p)-(\Delta p)\uu\right]\nonumber \\
 & -\nabla\cdot\left[2\uu\cdot\nabla\left(\uu\cdot\nabla\uu\right)-\left(\nabla\cdot(\uu\cdot\nabla\uu)\right)\uu\right]\nonumber \\
 & -2\nu\nabla\cdot\left[\textrm{tr}\left(\nabla\left((\uu\cdot\nabla)\nabla\uu\right)\right)-(\uu\cdot\nabla)\Delta\uu\right].\label{trs2-eq}
\end{align}

Let us note that $\textrm{tr}\S=\nabla\cdot\uu=0$. Since $\S$ is
a symmetric tensor, if we assume that its three real eigenvalues are
denoted by $a,b$ and $c$, which are arranged so that $a\geq b\geq c$,
then $a+b+c=0$, $\textrm{tr}\S^{2}$ equals $a^{2}+b^{2}+c^{2}$
and $\textrm{tr}\bm{S}^{3}$ coincides with $a^{3}+b^{3}+c^{3}=3abc.$
Note that the variance of three eigenvalues $a,b$ and $c$ is a dynamical
quantity
\[
P=\frac{1}{3}\left((a-b)^{2}+(b-c)^{2}+(c-a)^{2}\right)
\]
which measures the derivation from the mean, namely from zero. Then
$P=\frac{1}{3}\textrm{tr}\S^{2}$ due to the fact that $\textrm{tr}\S=0$,
and therefore (\ref{trs2-eq}) is also the evolution equation for
the variance of the eigenvalues of the strain tensor. This identity
explains the stretching of strain tensor is proportional to that of its
magnitude.

We also note that among $a,b$ and $c$, only two of them are free.
Therefore in order to determine the evolution of these eigenvalues
of the rate-of-strain, in addition to (\ref{trs2-eq}) one needs one
more equation: naturally an evolution equation for
$\textrm{tr}\bm{S}^{3}$, which is however a bit complicated
and is given as the following: 
\begin{align*}
\left(\frac{\partial}{\partial t}-L^{\star}\right) & \textrm{tr}\bm{S}^{3}=-3|\S|^{4}+\frac{3}{4}\left(|\S|^{2}|\O|^{2}-|\S\O|^{2}\right)\\
 & -6\nu S_{k}^{i}\frac{\partial S_{i}^{j}}{\partial x^{l}}\frac{\partial S_{j}^{k}}{\partial x^{l}}-3S_{i}^{k}S_{k}^{j}\frac{\partial^{2}p}{\partial x^{i}\partial x^{j}},
\end{align*}
We will not explore this equation further in the present paper.

\section{Isotropic turbulent flows}

In this section we present several new results about homogeneous isotropic
flows \citep{Batchelor 1953,McComb 2014,Monin and Yaglom 1965} in
a developed turbulence region. Suppose $\uu(\bm{x},t)$ is the random
velocity field of an isotropic turbulent flow, so that its mean velocity
is constant (being zero without loss of generality).

The following convention is employed: if $Z$ be a dynamical variable
of turbulent flow, then $\left\langle Z\right\rangle $ denotes the
mean value of $Z$. Since the flow is isotropic, so that $\left\langle \nabla\cdot Z\right\rangle =0$
as long as $Z$ is a tensor field depending only on the turbulent
dynamical variables. Moreover, if $f$ is a dynamical scalar, then
\begin{align*}
L^{\star}f & =\nu\Delta f-\uu\cdot\nabla f\\
 & =\nu\Delta f-\nabla\cdot\left[f\uu\right]
\end{align*}
so that $\left\langle L^{\star}f\right\rangle =0$. 
\begin{thm}
For an isotropic turbulent flow $\uu(\bm{x},t)$ we have 
\begin{equation}
\left\langle |\S|^{2}\right\rangle =\frac{1}{2}\left\langle |\O|^{2}\right\rangle ,\label{mean s2}
\end{equation}
\begin{equation}
\left\langle \textrm{tr}(\S^{3})\right\rangle =-\frac{3}{4}\left\langle \O\cdot\S\O\right\rangle \label{mean s3}
\end{equation}
and 
\begin{equation}
\left\langle |\nabla\S|^{2}\right\rangle =\frac{1}{2}\left\langle |\nabla\O|^{2}\right\rangle .\label{grad-tr sw1-1}
\end{equation}
\end{thm}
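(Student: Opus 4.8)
The plan is to read off all three identities as immediate consequences of the exact, divergence-form expressions already established in Part~2, combined with the single averaging rule for isotropic flows: $\langle\nabla\cdot Z\rangle=0$ whenever $Z$ is a tensor field assembled purely from the turbulent dynamical variables. The common mechanism is that in each case the left-hand side differs from a pure divergence precisely by the quantity whose mean we want, so averaging the identity and discarding the divergence term delivers the result in one line.

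First, for \eqref{mean s2} I would combine \eqref{tr2} and \eqref{tr2-sw}. The latter gives $\textrm{tr}\A^{2}=\textrm{tr}\S^{2}-\tfrac12|\O|^{2}$, while \eqref{tr2} identifies $\textrm{tr}\A^{2}=\nabla\cdot(\uu\cdot\nabla\uu)$, so that $\textrm{tr}\S^{2}-\tfrac12|\O|^{2}=\nabla\cdot(\uu\cdot\nabla\uu)$. Since $\uu\cdot\nabla\uu$ depends only on $\uu$ and $\nabla\uu$, applying $\langle\cdot\rangle$ annihilates the right-hand side; using the stated identification $\textrm{tr}\S^{2}=|\S|^{2}$ this is exactly $\langle|\S|^{2}\rangle=\tfrac12\langle|\O|^{2}\rangle$.

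Second, for \eqref{mean s3} I would proceed identically with \eqref{tr3} and \eqref{tr3-sw}: the relation $\textrm{tr}\A^{3}=\textrm{tr}\S^{3}+\tfrac34\,\O\cdot\S\O$ together with the divergence form of $\textrm{tr}\A^{3}$ gives $\textrm{tr}\S^{3}+\tfrac34\,\O\cdot\S\O=\nabla\cdot(\cdots)$ with the bracket built from $\uu$ and its derivatives, whence averaging yields $\langle\textrm{tr}\S^{3}\rangle=-\tfrac34\langle\O\cdot\S\O\rangle$. Third, for \eqref{grad-tr sw1-1} I would invoke the gradient identity \eqref{grad-tr sw1} directly, since its left side is already $|\nabla\S|^{2}-\tfrac12|\nabla\O|^{2}$ and its right side is once more the divergence of a field formed from $\uu$ and its derivatives; averaging then gives $\langle|\nabla\S|^{2}\rangle=\tfrac12\langle|\nabla\O|^{2}\rangle$.

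I expect the only genuine point needing care — more a matter of justification than of computation — to be verifying that the averaging rule truly applies to each of the three divergences. One must confirm that the fields inside them, namely $\uu\cdot\nabla\uu$, the cubic expression of \eqref{tr3}, and $\textrm{tr}(\nabla((\uu\cdot\nabla)\nabla\uu))-(\uu\cdot\nabla)\Delta\uu$, are constructed solely from the velocity and its spatial derivatives, involving no externally fixed reference direction, so that homogeneity and isotropy force their divergences to have vanishing mean. Granting the averaging identity $\langle\nabla\cdot Z\rangle=0$ as formulated for the isotropic regime, each of the three assertions then follows with no further hypotheses beyond incompressibility and the exactness relations already proved.
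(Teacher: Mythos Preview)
Your proposal is correct and follows exactly the paper's approach: the paper's proof simply cites the pairs \eqref{tr2}--\eqref{tr2-sw}, \eqref{tr3}--\eqref{tr3-sw}, and the identity \eqref{grad-tr sw1}, and implicitly applies the isotropic averaging rule $\langle\nabla\cdot Z\rangle=0$ to kill the divergence terms. Your write-up is in fact more explicit than the paper's, which compresses the argument to a single sentence per identity.
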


\begin{proof}
The first equality (\ref{mean s2}) follows from (\ref{tr2-sw}, \ref{tr2}),
and similarly (\ref{mean s3}) is a consequence of (\ref{tr3}, \ref{tr3-sw}).
From (\ref{grad-tr sw1}) one also deduces (\ref{grad-tr sw1-1}). 
\end{proof}
For the dissipation rate of an isotropic turbulent flow, it follows
immediately from (\ref{eng-balance}) and (\ref{evo-2}) that 
\[
\frac{\partial}{\partial t}\left\langle |\uu|^{2}\right\rangle =-\nu\left\langle |\O|^{2}\right\rangle ,
\]
\[
\frac{\partial}{\partial t}\left\langle |\S|^{2}\right\rangle +\nu\left\langle |\nabla\O|^{2}\right\rangle =\left\langle \O\cdot\S\O\right\rangle .
\]

The following theorem provides a quantitative result about the energy
dissipation in isotropic turbulent flows.
\begin{thm}
For an isotropic turbulent flow then the following
entropy functional 
\begin{equation}
t\rightarrow\left\langle |\O(\cdot,t)|\right\rangle +\frac{1}{\sqrt{2}\nu}\left\langle |\uu(\cdot,t)|^{2}\right\rangle \label{enstropy}
\end{equation}
is monotonically decreasing. 
\end{thm}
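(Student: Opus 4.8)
The plan is to differentiate the two pieces of the functional (\ref{enstropy}) separately and show that their time derivatives cancel with a favourable sign. For the kinetic-energy piece I would start from the averaged energy balance: averaging (\ref{eng-balance}) and using that $\langle\nabla\cdot Z\rangle=0$ under isotropy annihilates every flux term, so that $\frac{\partial}{\partial t}\langle|\uu|^{2}\rangle=-\nu\langle|\O|^{2}\rangle$, whence
\[
\frac{d}{dt}\,\frac{1}{\sqrt{2}\nu}\langle|\uu|^{2}\rangle=-\frac{1}{\sqrt{2}}\langle|\O|^{2}\rangle.
\]
The substance of the argument lies in bounding $\frac{d}{dt}\langle|\O|\rangle$ from above, for which I would establish a Kato-type differential inequality for the scalar $|\O|$ directly from the vorticity equation (\ref{eq:ns-f-1}).

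Writing $\frac{\partial}{\partial t}-L^{\star}=\frac{D}{Dt}-\nu\Delta$ with $\frac{D}{Dt}=\frac{\partial}{\partial t}+\uu\cdot\nabla$, and setting $\phi=|\O|=(\omega^{i}\omega^{i})^{1/2}$, a direct computation away from the zero set $\{\O=0\}$ gives
\[
\Delta\phi=\frac{\O\cdot\Delta\O}{\phi}+\frac{|\nabla\O|^{2}-|\nabla\phi|^{2}}{\phi}.
\]
The pointwise Kato inequality $|\nabla|\O||\le|\nabla\O|$ makes the second term nonnegative, so $\Delta\phi\ge\O\cdot\Delta\O/\phi$. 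Contracting the vorticity equation with $\O/\phi$ and combining with this bound yields
\[
\left(\frac{\partial}{\partial t}-L^{\star}\right)|\O|\le\frac{\O\cdot\S\O}{|\O|}.
\]
Since $|\O|$ is a dynamical scalar, averaging kills the generator via $\langle L^{\star}f\rangle=0$, leaving $\frac{d}{dt}\langle|\O|\rangle\le\langle\O\cdot\S\O/|\O|\rangle$.

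To close the estimate I would bound the right-hand side pointwise. As $\S$ is symmetric, $\O\cdot\S\O\le\lambda_{\max}|\O|^{2}\le|\S|\,|\O|^{2}$, because the spectral norm of $\S$ is dominated by its Frobenius norm $|\S|=(\mathrm{tr}\,\S^{2})^{1/2}$; hence $\O\cdot\S\O/|\O|\le|\S|\,|\O|$. Cauchy--Schwarz then gives $\langle|\S|\,|\O|\rangle\le\langle|\S|^{2}\rangle^{1/2}\langle|\O|^{2}\rangle^{1/2}$, and the isotropy identity (\ref{mean s2}), $\langle|\S|^{2}\rangle=\tfrac12\langle|\O|^{2}\rangle$, converts this into $\frac{d}{dt}\langle|\O|\rangle\le\frac{1}{\sqrt{2}}\langle|\O|^{2}\rangle$. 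Adding the two derivatives, the contributions $\pm\frac{1}{\sqrt{2}}\langle|\O|^{2}\rangle$ cancel, so the functional is non-increasing.

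The step I expect to be the main obstacle is the Kato inequality itself, specifically justifying the formula for $\Delta|\O|$ and the subsequent averaging across the zero set $\{\O=0\}$, where $|\O|$ is not differentiable. I would handle this by regularising, replacing $|\O|$ with $\phi_{\varepsilon}=(|\O|^{2}+\varepsilon^{2})^{1/2}$, deriving the corresponding inequality with an error controlled uniformly in $\varepsilon$, and passing to the limit $\varepsilon\downarrow0$; the Kato inequality is stable under this limit and the singular set contributes nothing. The remaining ingredients---the spectral-versus-Frobenius bound, Cauchy--Schwarz, and the substitution of (\ref{mean s2})---are routine.
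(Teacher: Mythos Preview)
Your proposal is correct and follows essentially the same route as the paper: the paper likewise regularises, applying the enstrophy identity to $\Psi_{\delta}(|\O|^{2})=(|\O|^{2}+\delta)^{q/2}$ (your $\phi_{\varepsilon}$ is precisely the case $q=1$, $\delta=\varepsilon^{2}$), discards the nonnegative gradient term via the Kato inequality, and lets $\delta\downarrow0$. The remaining bound $\langle|\O|^{-1}\O\cdot\S\O\rangle\le\langle|\S|^{2}\rangle^{1/2}\langle|\O|^{2}\rangle^{1/2}$ is obtained in the paper by exactly your pointwise-plus-Cauchy--Schwarz step, followed by (\ref{mean s2}) and the averaged energy balance.
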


\begin{proof}
If $\varPsi$ is differentiable on $(0,\infty)$, then 
\begin{equation}
\left(\frac{\partial}{\partial t}-L^{\star}\right)\varPsi(|\O|^{2})=2\varPsi'\O\cdot\S\O-2\nu\varPsi'|\nabla\O|^{2}-\nu\varPsi''\left|\nabla|\O|^{2}\right|^{2}.\label{phi eq1}
\end{equation}
Applying (\ref{phi eq1}) to $\varPsi_{\delta}(x)=\left(x+\delta\right)^{q/2}$,
where $\delta>0$ and $q\geq1$ are two constants, and using 
\[
|\nabla\O|^{2}\geq\frac{\left|\nabla|\O|^{2}\right|^{2}}{4|\O|^{2}},
\]
one obtains that 
\begin{equation}
\left(\frac{\partial}{\partial t}-L^{\star}\right)\varPsi_{\delta}(|\O|^{2})\leq-2\nu\varPsi_{\delta}'\frac{q-1}{4}\frac{\left|\nabla|\O|^{2}\right|^{2}}{|\O|^{2}+\delta}+2\varPsi_{\delta}'\O\cdot\S\O.\label{phi 2}
\end{equation}
By letting $\delta\downarrow0$ and taking mean value on both sides,
one may deduce that 
\begin{equation}
\frac{\partial}{\partial t}\left\langle |\O|^{q}\right\rangle \leq-4\left(1-\frac{1}{q}\right)\nu\left\langle \left|\nabla|\O|^{q/2}\right|^{2}\right\rangle +q\left\langle |\O|^{q-2}\O\cdot\S\O\right\rangle .\label{phi 2-1}
\end{equation}
Choosing $q=1$ we thus obtain that 
\begin{align}
\frac{\partial}{\partial t}\left\langle |\O|\right\rangle  & \leq\left\langle |\O|^{-1}\O\cdot\S\O\right\rangle \leq\sqrt{\left\langle |\S|^{2}\right\rangle }\sqrt{\left\langle |\O|^{2}\right\rangle }\nonumber \\
 & =\frac{1}{\sqrt{2}}\left\langle |\O|^{2}\right\rangle =-\frac{1}{\sqrt{2}\nu}\frac{\partial}{\partial t}\left\langle |\uu|^{2}\right\rangle ,\label{phi 2-1-1}
\end{align}
which yields that 
\[
\frac{\partial}{\partial t}\left[\left\langle |\O|\right\rangle +\frac{1}{\sqrt{2}\nu}\left\langle |\uu|^{2}\right\rangle \right]\leq0.
\]
\end{proof}

\section{Small scale of the vorticity under the similarity hypotheses }

In this section we consider turbulent flows which are not necessary
isotropic.

Consider a viscous turbulent flow (with viscosity $\nu>0$) in a region
with typical velocity $U$ being the maximum velocity, and $L$ the
typical length so that the Reynolds number is $\textrm{Re}=UL/\nu$.
As in the dimensionless analysis of fluid flows,
let us set $\bm{\varphi}(\bm{x},t)=\frac{1}{U}\uu(L\bm{x},\kappa t)$
and $\tilde{p}(\bm{x},t)=\frac{1}{U^{2}}p(L\bm{x},\kappa t)$,
where $\kappa=L/U$. Then the Navier-Stokes equation turns into 
\begin{equation}
\left(\frac{\partial}{\partial t}+\bm{\varphi}\cdot\nabla-\frac{1}{\textrm{Re}}\Delta\right)\bm{\varphi}=-\nabla\tilde{p},\quad\nabla\cdot\bm{\varphi}=0.\label{sim01}
\end{equation}
Let $\bm{\theta}=\nabla\times\bm{\varphi}$ and $\varGamma_{ij}=\frac{1}{2}\left(\frac{\partial\varphi_{i}}{\partial x_{j}}+\frac{\partial\varphi_{j}}{\partial x_{i}}\right)$,
$1\leq i,j\leq3$, so that $\bm{\theta}(\bm{x},t)=\frac{L}{U}\bm{\omega}(L\bm{x},\kappa t)$,
$\bm{\varGamma}(\bm{x},t)=\frac{L}{U}\bm{S}(L\bm{x},\kappa t)$.
The dimensionless vorticity equation then is given by 
\begin{equation}
\left(\frac{\partial}{\partial t}+\bm{\varphi}\cdot\nabla-\frac{1}{\textrm{Re}}\Delta\right)\bm{\theta}=\bm{\varGamma}\bm{\theta}.\label{vort-norm1}
\end{equation}
The enstrophy equation is written as the following dimensionless form:
\begin{equation}
\left(\frac{\partial}{\partial t}+\bm{\varphi}\cdot\nabla-\frac{1}{\textrm{Re}}\Delta\right)\frac{|\bm{\theta}|^{2}}{2}=\bm{\theta}\cdot\bm{\varGamma}\bm{\theta}-\frac{1}{\textrm{Re}}|\nabla\bm{\theta}|^{2}.\label{enstrop2}
\end{equation}

The similarity hypothesis claims that, in the developed turbulent
region, the local small structures of turbulent flows with the same
Reynolds number are the same statistically. A trivial solution to
(\ref{sim01}) with the maximum velocity is the the constant solution
$\bm{\varphi}=(1,1,1)$, the fundamental solution associated
with the corresponding elliptic operator $\frac{1}{\textrm{Re}}\Delta\pm\nabla$
is given by 
\[
\varGamma_{\pm}(\bm{\xi},t,\bm{x})=\sigma^{3}\left(\frac{1}{2\pi t}\right)^{3/2}\exp\left[-\frac{\left|\sigma(\bm{x}-\bm{\xi})\mp\sigma t\right|^{2}}{2t}\right]
\]
for $t>0$, where $\sigma=\sqrt{\textrm{Re}/2}$ for simplicity. For
a general $\bm{\varphi}$, there are explicit bounds for the
fundamental solution $\varGamma(s,\bm{\xi},t,\bm{x})$
associated with $\frac{1}{\textrm{Re}}\Delta-\bm{\varphi}\cdot\nabla$
obtained in \citep{Qian and Zheng 2004}. For $\beta\in\mathbb{R}$,
let 
\begin{align*}
p^{\beta}(x,t,y) & =\frac{1}{\sqrt{2\pi t}}\int_{|x-y|/\sqrt{t}}^{\infty}ze^{-\left(z-\beta\sqrt{t}\right)^{2}/2}dz\\
 & =e^{-\frac{\beta^{2}}{2}t+\beta|x-y|}\frac{1}{\sqrt{2\pi t}}e^{-\frac{|x-y|^{2}}{2t}}+\beta\Psi\left(\frac{|x-y|-\beta t}{\sqrt{t}}\right),
\end{align*}
where $\Phi(a)=\frac{1}{\sqrt{2\pi}}\int_{-\infty}^{a}e^{-z^{2}/2}dz$
and $\Psi(a)=1-\Phi(a)$. Then we have 
\[
\sigma^{3}\prod_{i=1}^{3}p^{\sigma}(\sigma\xi_{i},t-\tau,\sigma x_{i})\leq\varGamma(\tau,\bm{\xi},t,\bm{x})\leq\sigma^{3}\prod_{i=1}^{3}p^{-\sigma}(\sigma\xi_{i},t-\tau,\sigma x_{i}).
\]

Let $\delta=t-\tau>0$ be the elapsing time. Then 
\begin{align*}
f_{\pm}(\sigma,\xi,\delta,x) & \equiv\sigma p^{\pm\sigma}(\sigma\xi,\delta,\sigma x)\\
 & =e^{-\frac{\sigma^{2}}{2}\delta\pm\sigma^{2}|x-y|}\frac{1}{\sqrt{2\pi\delta\sigma^{-2}}}e^{-\frac{|x-y|^{2}}{2\delta\sigma^{-2}}}\pm\sigma\Psi\left(\frac{|x-y|\mp\delta}{\sqrt{\delta\sigma^{-2}}}\right).
\end{align*}

Now observe that, when $\delta>0$ is small and $\sigma>0$ is large,
it follows that 
\begin{align*}
f_{\pm}(\sigma,\xi,\delta,x) & \sim e^{-\frac{\sigma^{2}}{2}\delta\pm\sigma^{2}|x-y|}\delta_{x}(d\xi)\pm\sigma\Psi\left(\frac{|x-y|\mp\delta}{\sqrt{\delta\sigma^{-2}}}\right)\\
 & \sim\left\{ \begin{array}{ccc}
e^{-\frac{\sigma^{2}}{2}\delta}\delta_{x}(d\xi) & , & \textrm{ if \ensuremath{|x-y|\mp\delta>0};}\\
e^{-\frac{\sigma^{2}}{2}\delta}\delta_{x}(d\xi)\pm\sigma & , & \textrm{ if \ensuremath{|x-y|\mp\delta<0};}\\
e^{-\frac{\sigma^{2}}{2}\delta}\delta_{x}(d\xi)\pm\frac{\sigma}{2} & , & if\ensuremath{|x-y|\mp\delta=0}.
\end{array}\right.
\end{align*}
In particular, 
\[
f_{-}(\sigma,\xi,\delta,x)\sim e^{-\frac{\sigma^{2}}{2}\delta}\delta_{x}(d\xi),
\]
and 
\[
f_{+}(\sigma,\xi,\delta,x)\sim\left\{ \begin{array}{ccc}
e^{-\frac{\sigma^{2}}{2}\delta}\delta_{x}(d\xi) & , & \textrm{ if \ensuremath{|x-y|>\delta};}\\
e^{-\frac{\sigma^{2}}{2}\delta}\delta_{x}(d\xi)+\sigma & , & \textrm{ if \ensuremath{|x-y|<\delta};}\\
e^{-\frac{\sigma^{2}}{2}\delta}\delta_{x}(d\xi)+\frac{\sigma}{2} & , & if\ensuremath{|x-y|=\delta}.
\end{array}\right.
\]

Now we apply this to the vorticity equation (\ref{vort-norm1}). We
are interested in the enstrophy transfer in a small
scale. Therefore we assume that the tensor-of-strain $\bm{\varGamma}$
is a constant symmetric matrix with three eigenvalues $a\geq b\geq c$
with $a+b+c=0$. Then 
\[
\bm{\theta}(\bm{x},t)=e^{(t-\tau)\bm{\varGamma}}\int\varGamma(\tau,\bm{\xi},t,\bm{x})\bm{\theta}(\bm{\xi},\tau)d\xi
\]
for $\tau<t$ with $\delta=t-\tau>0$ being small.

Since $e^{\delta\bm{\varGamma}}\approx\bm{I}+\delta\bm{\varGamma}$
, we obtain that 
\[
\theta^{i}(\bm{x},t)\sim\int\varGamma(\tau,\bm{\xi},t,\bm{x})\theta^{i}(\bm{\xi},\tau)d\xi+\delta\int\varGamma(\tau,\bm{\xi},t,\bm{x})\Gamma_{j}^{i}\theta^{j}(\bm{\xi},\tau)d\xi.
\]
Let us replace $\varGamma(\tau,\bm{\xi},t,\bm{x})$
by its bounds, and obtain 
\begin{align*}
\theta^{i}(\bm{x},t) & \sim e^{-3\frac{\sigma^{2}}{2}\delta}\theta^{i}(\bm{x},\tau)+e^{-3\frac{\sigma^{2}}{2}\delta}\delta\Gamma_{j}^{i}\theta^{j}(\bm{x},\tau)\\
 & +\sigma\int_{|\xi-x|<\delta}\theta^{i}(\bm{\xi},\tau)d\xi+\sigma\delta\int_{|\xi-x|<\delta}\Gamma_{j}^{i}\theta^{j}(\bm{\xi},\tau)d\xi.
\end{align*}
Therefore 
\[
\theta^{i}\left(x,t\right)\sim e^{-3\frac{\sigma^{2}}{2}\delta}\theta^{i}(\bm{x},\tau)+e^{-3\frac{\sigma^{2}}{2}\delta}\delta\Gamma_{j}^{i}\theta^{j}(\bm{x},\tau)+\sigma\int_{|\xi-x|<\delta}\theta^{i}(\bm{\xi},\tau)d\xi.
\]

This equation shows that the original vorticity in a turbulent flow
will be quickly destroyed after $\kappa\delta\gg\frac{L}{U}\frac{2}{\textrm{Re}}=\frac{2\nu}{U^{2}}$
and new vorticity created about the time $\kappa\delta\sim\frac{L}{U}\frac{2}{\textrm{Re}}=\frac{2\nu}{U^{2}}$,
which are independent of the size of the turbulent flows as one may
expect.

\end{document}